\newtheorem{thm}{Theorem}
\newtheorem{pro}{Proposition}
\newtheorem{dfn}{Definition}
\numberwithin{thm}{section} \numberwithin{cor}{section}
\numberwithin{pro}{section} \numberwithin{dfn}{section}
\numberwithin{lem}{section}
\numberwithin{rem}{section}\numberwithin{equation}{section}
\newenvironment{proof}
{\noindent {\em Proof. }} {\hfill $\Box$}
\newcommand{\f}{\frac}
\newcommand{\no}{\noindent}
\newcommand{\sqp}{\sqrt{p}}
\newcommand{\sqq}{\sqrt{q}}
\newcommand{\ept}{e^{ip\theta}}
\newcommand{\eqt}{e^{-iq\theta}}
\newcommand{\ppq}{\sqrt{\f{p}{p+q}}}
\newcommand{\qpq}{\sqrt{\f{q}{p+q}}}
\newcommand{\ba}{\begin{array}}
\newcommand{\ea}{\end{array}}
\newcommand{\R}{\mathbb R}
\newcommand{\C}{\mathbb C}
\newcommand{\mtc}{\sqrt{\f{-t}{c}}}
\newcommand{\ptc}{\sqrt{\f{t}{c}}}
\newcommand{\dst}{d|\!|S_t|\!|}
\newcommand{\dszero}{d|\!|S_0|\!|}
\newcommand{\deet}{d|\!|E_t|\!|}
\newcommand{\dezero}{d|\!|E_0|\!|}
\begin{document}
\title
{Hamiltonian Stationary Shrinkers and Expanders for Lagrangian Mean Curvature Flows}
\author{Yng-Ing Lee* and Mu-Tao Wang**}

\date{May 20, 2007, revised June 29, 2007}
\maketitle \leftline{*Department of Mathematics and Taida
Institute of Mathematical Sciences,}
\leftline{\text{ }\,National
Taiwan University, Taipei, Taiwan}
 \leftline{\text{ }\,National
Center for Theoretical Sciences, Taipei Office}
 \centerline{email: yilee@math.ntu.edu.tw}
\leftline{**Department of Mathematics, Columbia University,  New
York, NY 10027,USA} \leftline{\text{  }\,\,\,Taida Institute of Mathematical Sciences, Taipei, Taiwan}\centerline{email: mtwang@math.columbia.edu}

\begin{abstract}
We construct examples of shrinkers and expanders for Lagrangian
mean curvature flows. These examples are Hamiltonian stationary
and asymptotic to the union of two  Hamiltonian stationary cones
found by Schoen and Wolfson in \cite{sw}. The Schoen-Wolfson cones
$C_{p,q}$ are obstructions to the existence problems of special
Lagrangians or Lagrangian minimal surfaces in the variational
approach. It is known that these cone singularities cannot be
resolved by any smooth oriented Lagrangian submanifolds. The
shrinkers and expanders that we found can be glued together to
yield solutions of the Brakke motion-a weak formulation of the
mean curvature flow. For any coprime pair $(p,q)$ other than
$(2,1)$, we construct such a solution that resolves any single
Schoen-Wolfson cone $C_{p,q}$. This thus provides an evidence to
Schoen-Wolfson's conjecture that the $(2,1)$ cone is the only
area-minimizing cone. Higher dimensional generalizations are also
obtained.


\end{abstract}
\section{Introduction}

The existence of special Lagrangians in  Calabi-Yau manifolds
received much attention recently due to the critical role it plays
in the T-duality formulation of Mirror symmetry of
Strominger-Yau-Zaslow \cite{syz}.  Schoen and Wolfson took up the
variational approach of constructing special Lagrangians by
minimizing volumes in suitable Lagrangian classes. They discovered
non-flat Lagrangian cones that are Hamiltonian stationary
\cite{sw}. The existence of special Lagrangians can be established
once these cone singularities are excluded. However, these
singularities do occur in the Lagrangian minimizers in some K-3
surfaces, see \cite{wo}. Another potential approach to the
construction of special Lagrangians is the mean curvature flow- as
the gradient flow of the volume functional. However, the long-time
existence of such flows can only be verified in some special
cases, see for example \cite{sm}, \cite{sw}, \cite{wa1}, and
\cite{wa2}. In this article, we construct special weak solutions
of the Lagrangian mean curvature flows and show that the union of
two Schoen-Wolfson cones can be resolved by these flows.

Our ambient space is always the complex Euclidean space $\C^n$
with coordinates $z^i=x^i+\sqrt{-1} y^i$, the standard symplectic
form $\omega= \sum_{i=1}^n dx^i \wedge dy^i$, and the standard
almost complex structure $J$ with $J(\frac{\partial}{\partial
x^i})=\frac{\partial}{\partial y^i}$. On a Lagrangian submanifold
$\Sigma$, the mean curvature vector ${H}$ is given by
\begin{equation}\label{mcv} {H}=J\nabla \beta\end{equation} where
$\beta$ is the Lagrangian angle and $\nabla$ is the gradient on
$\Sigma$. By the first variation formula, the mean curvature
vector points to the direction where the volume is decreased most
rapidly. In this case, $\beta$ can be defined by the relation that
\[*_\Sigma (dz^1\wedge\cdots\wedge dz^n)=e^{i\beta}\] where $*_\Sigma$ is the Hodge *-star operator on $\Sigma$. We recall

\begin{dfn}
A Lagrangian submanifold $\Sigma$ is called Hamiltonian stationary
if the Lagrangian angle is harmonic. i.e. $\Delta \beta=0$ where
$\Delta$ is the Laplace operator on $\Sigma$. $\Sigma$ is a
special Lagrangian if $\beta$ is a constant function.
\end{dfn}

A Hamiltonian stationary Lagrangian submanifold is a critical
point of the volume functional among all Hamiltonian deformations
and a special Lagrangian is a volume minimizer in its homology
class.

 As the special Lagrangians are volume minimizers, it is thus natural to use the mean curvature flow in the construction of special Lagrangians. Equation (\ref{mcv}) implies
that the mean curvature flow is a Lagrangian deformation, i.e. a
Lagrangian submanifold remains Lagrangian along the mean curvature
flow. In a geometric flow, the singularity often models on a
soliton solution. In the case of mean curvature flows , one type
of soliton solutions of particular interest are those moved by
scaling in the Euclidean space. We recall:

\begin{dfn}
A Lagrangian submanifold in the Euclidean space is called a
self-similar solution if

\[F^\perp=2cH\] for some constant $c$, where $F^\perp$ is normal projection of the position vector $F$ in the Euclidean space and $H$ is the mean curvature vector. It is called a self-shrinker if $c<0$  and self-expander if $c>0$.
\end{dfn}

It is not hard to see that if $F$ is a self-similar solution, then
$F_t$ defined by $F_t=\sqrt{\frac{t}{c}} F$ is moved by the mean
curvature flow. By Huisken's monotonicity formula \cite{hu}, any
central blow up of a finite-time singularity of the mean curvature
flow is a self-similar solution.  In this article, we find
Hamiltonian stationary self-shrinkers and self-expanders of the
Lagrangian mean curvature flow that are asymptotic to the union of
two Schoen-Wolfson cones. Altogether they form a Brakke flow (see
\S \ref{brakke} ) which is a weak formulation of the mean curvature
flow proposed by Brakke in \cite{br}. To be more precise, we prove:

\begin{thm}
 For each Schoen-Wolfson cone $C_{pq}$ , there exists a corresponding
 cone $C_{pq}'$ and a solution
$V_t, -\infty<t<\infty$ of the Brakke motion without mass loss so
that $V_t, t<0$ is a  smooth Hamiltonian stationary self-shrinker
and $V_t, t>0$ is a smooth Hamiltonian stationary self-expander.
Moreover, $V_t$ approach the union $C_{pq}\cup C_{pq}'$ as
$t\rightarrow 0$ from either direction.

\end{thm}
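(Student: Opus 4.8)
The plan is to exploit the $U(1)$-symmetry that the Schoen--Wolfson cone $C_{pq}$ inherits from the Hamiltonian action $(z^1,z^2)\mapsto(e^{ip\alpha}z^1,e^{-iq\alpha}z^2)$ and to search for self-similar solutions invariant under it. Concretely I would take the cohomogeneity-one ansatz
\[ z^1=\ept\,u(s),\qquad z^2=\eqt\,v(s), \]
with a profile curve $(u,v)=(\rho_1 e^{i\phi_1},\rho_2 e^{i\phi_2})$ in $\C^2$, so that the whole geometric problem descends to a system of ODEs in $s$. A direct computation gives the restriction of the Liouville form $\lambda=\sum_i(x^idy^i-y^idx^i)$ as $\lambda|_\Sigma=(p\rho_1^2-q\rho_2^2)\,d\theta+(\rho_1^2\phi_1'+\rho_2^2\phi_2')\,ds$, while the Lagrangian condition forces $p\rho_1^2-q\rho_2^2$ to be constant along the profile, and the Lagrangian angle takes the form $\beta=(p-q)\theta+\psi(s)$ for an explicit phase $\psi$.

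The first key step is to recast the self-similar equation $F^\perp=2cH$ in Lagrangian terms. Since $J$ interchanges $T\Sigma$ and $N\Sigma$, applying $J$ and using $H=J\nabla\beta$ turns $F^\perp=2cH$ into the one-form identity $\lambda=-2c\,d\beta$ on $\Sigma$; both sides are closed because $d\lambda=2\omega$ and $\omega|_\Sigma=0$. Matching the $d\theta$-components fixes the Lagrangian constant to $p\rho_1^2-q\rho_2^2=-2c(p-q)$, which already distinguishes the shrinker ($c<0$) from the expander ($c>0$). Choosing the phases $\phi_1=\pq\,s$ and $\phi_2=-\qp\,s$ (so that the profile carries the factors $\epqs,\eqps$) forces $p\phi_2'+q\phi_1'=0$ and makes the $ds$-component of $\lambda=-2c\,d\beta$ an automatic consequence of the Lagrangian constraint, so that the self-similar condition holds identically. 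It then remains to impose Hamiltonian stationarity $\Delta\beta=0$; because $\beta$ is linear in $\theta$ and $s$ and the induced metric is $\theta$-independent, this collapses to a single first-order ODE for the remaining radial freedom in $(\rho_1,\rho_2)$, which I would integrate in closed form after fixing a convenient gauge for $s$.

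With the profiles in hand, the second step is the asymptotic analysis. I would study the behaviour of the solution as $s\to\pm\infty$ and show that at each end the profile becomes radial, so that $\Sigma$ is asymptotic to a union of two Schoen--Wolfson cones, $C_{pq}$ at one end and a phase-rotated companion $C_{pq}'$ at the other; simultaneously I would check that the profile stays smooth and embedded, so that $\Sigma$ is a genuine smooth Hamiltonian stationary self-shrinker for $c<0$ and self-expander for $c>0$.

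The final, and I expect hardest, step is to assemble these into a Brakke flow through the singular time. Using $F_t=\ptc\,F$, the scaled shrinker ($t<0$) shrinks its smoothing region to the vertex while leaving the scale-invariant conical ends fixed, so that $V_t\to C_{pq}\cup C_{pq}'$ as $t\to 0^-$, and symmetrically the expander emanates from the same cone as $t\to 0^+$. Defining $V_t$ by these two families together with $V_0=C_{pq}\cup C_{pq}'$, I would verify Brakke's inequality across $t=0$ and, crucially, that there is no mass loss at the singular time---that is, that the varifold limits from both sides equal the cone with the correct multiplicity and that the total mass varies continuously. This matching of the shrinking and expanding families precisely at the cone, together with the measure-theoretic control needed to rule out a sudden drop of area at $t=0$, is the real analytic heart of the argument; the ODE construction, by contrast, is essentially explicit.
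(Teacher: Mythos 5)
Your ansatz is the same cohomogeneity-one one the paper uses (inherited from Schoen--Wolfson), and your reformulation of $F^\perp=2cH$ as $\lambda=-2c\,d\beta$ is correct and is a cleaner derivation than the paper's, which simply exhibits $S(\mu,\theta)=(\cosh\mu\,\sqrt{q}\,e^{ip\theta},\,i\sinh\mu\,\sqrt{p}\,e^{-iq\theta})$ and $E(\mu,\theta)=(\sinh\mu\,\sqrt{q}\,e^{ip\theta},\,i\cosh\mu\,\sqrt{p}\,e^{-iq\theta})$ and verifies $\beta=(p-q)\theta$ and $F^\perp=-\frac{pq}{p-q}H$ by direct computation. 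One caution on your ODE step: the actual solutions have \emph{constant} phases $\phi_1,\phi_2$; with your choice $\phi_1=\sqrt{p/q}\,s$, $\phi_2=-\sqrt{q/p}\,s$ the profile curve spirals, and you must check that the ends are still asymptotic to rays (i.e.\ that $\rho\to\infty$ at finite $s$) or you will not get conical asymptotics at all.

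The two genuine gaps are exactly where the paper does its real work. First, the matching at $t=0$ is not automatic: writing $C_{\pm\pm}$ for the four cones $(\pm y\sqrt{q}e^{ip\theta},\pm iy\sqrt{p}e^{-iq\theta})$, the shrinker limits to $C_{++}\cup C_{+-}$ while the expander limits to $C_{++}\cup C_{-+}$, and these unions coincide only when $p$ and $q$ are both odd. In the mixed-parity cases one must replace half of one family by a rotated/angle-shifted copy (e.g.\ $e^{i\arg\mu}E_t(\mu,\theta+\frac{\arg\mu}{q})$), which produces two smooth sheets meeting along a circle; one then has to prove the generalized mean curvature has no singular contribution there, which the paper does via the cancellation $1+e^{2\pi ip/q}+\cdots+e^{2\pi ip(q-1)/q}=0$ for coprime $p,q$. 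Your proposal assumes the expander ``emanates from the same cone,'' which is precisely what fails without this modification. Second, ``verify Brakke's inequality across $t=0$'' hides the main analytic point: $|h(V_0)|^2\sim y^{-2}$ against $d\|V_0\|\sim |y|\,dy\,d\theta$, so $\int\phi|h(V_0)|^2\,d\|V_0\|=+\infty$ whenever $\phi(0)>0$, and the inequality $\bar D\|V_t\|(\phi)\le\delta(V_0,\phi)(h(V_0))$ then requires showing the one-sided derivatives of mass are themselves $-\infty$; when $\phi(0)=0$ one must instead show they are finite and equal to the first variation of the cone. The paper handles both via the substitution $y=\sqrt{-t/c}\sinh\mu$, the identity $|h(S_t)|^2\,d\|S_t\|=\frac{(p-q)^2}{\sqrt{pq}}d\mu\,d\theta$, and dominated convergence; your proposal correctly identifies this as the hard step but supplies no mechanism for it.
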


\begin{dfn}
We call such a solution  $V_t, -\infty<t<\infty$, a Hamiltonian
stationary self-similar eternal Brakke motion.

\end{dfn}

Without loss of generality, we can assume that $p>q$. When $q>1$,
we show that a single Schoen-Wolfson cone can be resolved by
self-similar Brakke motion.

\begin{thm}
 For any Schoen-Wolfson cone $C_{p,q}$ with $q>1$, there exists a Hamiltonian stationary
 self-similar eternal Brakke motion $V_t$ such that
$V_t$ approaches $C_{p,q}$ as $t\rightarrow 0$ from either
direction.
\end{thm}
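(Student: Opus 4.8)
The plan is to deduce Theorem 2 from the construction underlying Theorem 1 by showing that, when $q>1$, the partner cone $C_{p,q}'$ can be arranged to coincide with $C_{p,q}$ as a subset of $\C^2$, so that the eternal Brakke motion already produced is asymptotic to the \emph{single} cone $C_{p,q}$ from both sides. All of our self-similar solutions are invariant under the one-parameter group $G=\{(z^1,z^2)\mapsto(e^{ip\vartheta}z^1,e^{-iq\vartheta}z^2)\}$ that also generates $C_{p,q}$ through its link $\theta\mapsto(\qpq\,\ept,\ppq\,\eqt)$. First I would recall the equivariant reduction used for Theorem 1: writing a solution as $F(s,\theta)=(\gamma_1(s)\ept,\gamma_2(s)\eqt)$ with a profile curve $\gamma=(\gamma_1,\gamma_2)$, the self-similar equation $F^\perp=2cH$ together with the Hamiltonian stationary condition $\Delta\beta=0$ collapses to an autonomous ODE system for $\gamma$ that carries first integrals (the self-similar and angular-momentum type constants). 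The complete profile selected in Theorem 1 has two ends, and along each end $\gamma$ becomes asymptotically radial, $\gamma_j(s)\sim a_j^\pm\,s\,e^{\sqrt{-1}\psi_j^\pm}$, so each end is asymptotic to a Schoen-Wolfson cone; these are $C_{p,q}$ and $C_{p,q}'$.

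The decisive point is that $C_{p,q}'$ is exactly a rotated copy $(e^{\sqrt{-1}\alpha}z^1,e^{\sqrt{-1}\beta}z^2)$ of $C_{p,q}$, the phase shift $(\alpha,\beta)=(\psi_1^+-\psi_1^-,\psi_2^+-\psi_2^-)$ being the net turning of the profile between its two ends, which I would extract in closed form from the first integrals and the asymptotic boundary data of the ODE. A direct check then shows that a phase rotation $(e^{\sqrt{-1}\alpha},e^{\sqrt{-1}\beta})$ carries $C_{p,q}$ onto itself if and only if $q\alpha+p\beta\in2\pi\mathbb{Z}$: in that case there is a single $\delta$ with $p\delta\equiv\alpha$ and $-q\delta\equiv\beta$, so the rotation is realized by the reparametrization $\theta\mapsto\theta+\delta$ of the link and hence $C_{p,q}'=C_{p,q}$ as sets. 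Thus Theorem 2 reduces to verifying this congruence for the computed $(\alpha,\beta)$, and this is precisely where the hypothesis enters: the net phase produced by the shrinker profile is a fixed rational angle for which the congruence is satisfied exactly when $q>1$, consistent with the expected rigidity of the area-minimizing $(p,q)=(2,1)$ cone.

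Granting $C_{p,q}'=C_{p,q}$, the eternal Brakke motion $V_t$ of Theorem 1 is then asymptotic to $C_{p,q}\cup C_{p,q}'=C_{p,q}$ as $t\to0$ from either direction, remains a smooth Hamiltonian stationary self-shrinker for $t<0$ and self-expander for $t>0$, and the Brakke inequality together with the absence of mass loss is inherited verbatim from Theorem 1 since the underlying varifolds are unchanged. I expect the main obstacle to be the asymptotic phase computation: one must control the profile ODE near its ends sharply enough to evaluate $(\alpha,\beta)$ and then confirm $q\alpha+p\beta\in2\pi\mathbb{Z}$, including checking that the integration constant and the $s\mapsto-s$ symmetry of the ODE can be used to center the profile so that the two ends land on the very same cone rather than on merely congruent copies. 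A secondary point to verify is that for $q>1$ the resulting smooth neck stays embedded, so that $V_t$ is a genuine smoothing of $C_{p,q}$.
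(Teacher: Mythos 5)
There is a genuine gap: the mechanism you propose is not the one that makes the theorem true, and its central claim is false. You want to show that the partner cone $C_{p,q}'$ from Theorem 1.1 coincides with $C_{p,q}$ precisely when $q>1$. But whether the two asymptotic cones of the full shrinker (in the paper's notation $C_{++}=C_{p,q}$ and $C_{+-}$, which differ by the rotation $(z^1,z^2)\mapsto(z^1,-z^2)$) coincide is a parity condition, not a condition on the size of $q$: one needs $\delta$ with $p\delta\in 2\pi\mathbb{Z}$ and $q\delta\in\pi+2\pi\mathbb{Z}$, which for coprime $p,q$ happens if and only if $p$ is even (and similarly the expander's cones $C_{++},C_{-+}$ coincide iff $q$ is even). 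Thus for $(p,q)=(5,3)$ one has $q>1$ yet $C_{p,q}'\neq C_{p,q}$, while for $(p,q)=(2,1)$ one has $q=1$ yet $C_{++}=C_{+-}$ --- the opposite of what your congruence would need. Moreover, even in the parity cases where the two cones do coincide as sets, the full surface then converges to that cone with multiplicity two (the paper notes explicitly that the unrestricted shrinkers and expanders ``converge to the double of two cones''), so your argument would at best produce $2\,C_{p,q}$ at $t=0$, not a single copy.

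The paper's actual proof, and the place where $q>1$ genuinely enters, is different: one keeps only the half $\mu\ge 0$ of $S_t$ (for $t<0$) and of $E_t$ (for $t>0$). Each half is asymptotic to the single cone $C_{p,q}$ with multiplicity one, but now has a boundary circle at $\mu=0$. That circle is traversed $q$ times (for the expander; $p$ times for the shrinker) as $\theta$ runs over $[0,2\pi)$, with unit conormal $(e^{ip\theta},0)$, so the boundary term in the first variation carries the factor $1+e^{2\pi i p/q}+\cdots+e^{2\pi i p(q-1)/q}=0$ whenever $p,q$ are coprime and $q>1$ (and likewise with $p>1$ on the shrinker side). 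Hence the generalized mean curvature has no singular boundary part, the half-surfaces are still self-similar Brakke solutions, and they limit onto a single $C_{p,q}$ from both sides; the cancellation fails exactly for $q=1$, which is how $C_{2,1}$ is singled out. No analysis of the profile ODE or of asymptotic phases is needed, since the solutions are explicit; to repair your argument you would have to replace the phase-matching step by this boundary-cancellation argument for the halved surfaces.
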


Schoen-Wolfson show that $C_{p,q}$ is stable only if $p-q=1$ and they conjecture that only the $C_{2,1}$ cone is area-minimizing. The first author were informed by R. Schoen that this is the first time when $C_{2,1}$ can be distinguished from all other stable  $(p,q)$ cones.

We remark that self-similar solutions of Lagrangian mean curvature
flows were constructed by Anciaux \cite{an} using a different
ansatz. His examples approach special Lagrangian cones while ours
approach Hamiltonian stationary cones. Haskins \cite{ha3} also
observed these solutions are Hamiltonian stationary based on a
Hamiltonian formulation similar to the one used by Harvey and
Lawson \cite{hl} in their construction of examples of special
Lagrangians. Special Lagrangians and Hamiltonian stationary
Lagrangians are constructed by many authors.

Our theorem is analogous to the Feldman-Ilmanen-Knopf \cite{fik}
gluing construction for the K\"ahler-Ricci flows. Unlike the mean curvature flow, a notion of weak solutions
of Ricci flow has not yet been established.

This article is organized as the follows. Our examples are
presented in \S2 and the formulation of Brakke motion is recalled
in \S3. In section \S4 and \S5, we prove theorem 1.1 and theorem
1.2, respectively. Higher dimensional examples are presented in
\S6.

Both authors thank Mark Haskins for enlightening conversations and
comments on this subject. They would like to thank Dominic Joyce
for referring them to the articles \cite{jo1} and \cite{jo2} and
helpful remarks. The first author owes her gratitude to Tom
Ilmanen for discussions that lead to the finding of the
corresponding  cones and she thanks the hospitality of Ilmanen and
Institute for Mathematical Research at ETH during her visit. The
second author wishes to thank the support of the Taida Institute
for Mathematical Sciences during the preparation of this article.
The first author is supported by Taiwan NSC grant 95-2115-M-002.
The second author is supported by NSF grant DMS0605115 and a Sloan
research fellowship.

\section{Examples of self-similar solutions}
\subsection{Schoen-Wolfson cones}
\no Let $p$ and $q$ be two co-prime positive integers. The close and embedded curve
$$\gamma_{pq} (\theta)=(\qpq\:\ept \,,\,i\ppq\:\eqt), \hskip.6cm 0\leq \theta < 2\pi$$
is Legendrian and Hamiltonian stationary in $S^{3}$.
The cone over $\gamma_{pq}(\theta)$
is Lagrangian and Hamiltonian stationary and is denoted by
$C_{pq}$. It is stable if and only if $|p-q|=1$. As such
properties are invariant under $U(2)$, the cone over any $U(2)$
image of $\gamma_{pq}$ is again Lagrangian and Hamiltonian
stationary.
 These are possible cone singularities for the Lagrangian
minimizers studied in \cite{sw}.

\subsection{Self-shrinkers and self-expanders}

We take the same ansatz as Schoen-Wolfson and consider the
surfaces
$$F(\mu, \theta)=(z_{1}(\mu)\:\ept\,,\,z_{2}(\mu)\:\eqt), $$ where
 $ 0\leq \theta <  2\pi, \hskip.2cm \mu \in \R,$ and $z_{1}(\mu)$ and $z_{2}(\mu)$
  are curves in the complex
 plane. A direct computation shows that a sufficient condition for $F(\mu, \theta)$
 to be
  Lagrangian is that $p|z_{1}(\mu)|^{2}-q|z_{2}(\mu)|^{2}$ is a
  constant. One can further investigate the condition for $F(\mu, \theta)$  to be Hamiltonian
  stationary, and the condition for $F(\mu, \theta)$ to be self-similar. We will not explore
  the general situation here. Instead we show directly in the following that
  the surface
\[F(\mu, \theta)=(\cosh{\mu}\sqq\ept\,,\,i\sinh{\mu}\sqp\eqt), \]  where
 $ 0\leq \theta < 2\pi$ and $ \mu \in \R,$
 is  Hamiltonian stationary and self-similar.
We compute
 \[\f{\partial F}{\partial \mu}=(\sinh{\mu}\sqq\:\ept\,,\,i\cosh{\mu}\sqp\:\eqt), \]
and \[\f{\partial F}{\partial
\theta}=\sqrt{pq}(i\cosh{\mu}\sqp\:\ept\,,\,\sinh{\mu}\sqq\:\eqt). \]

 It is
easy to check that $\langle J\f{\partial F}{\partial
\mu},\f{\partial F}{\partial \theta}\rangle =0$, and thus the
surface is Lagrangian. The components of the induced metric on the
surface are
\[g_{11}=\left|\f{\partial F}{\partial \mu}\right|^{2}={p}\cosh^{2}{\mu}+{q}\sinh^{2}{\mu},\]
\[g_{22}=\left|\f{\partial F}{\partial \theta}\right|^{2}=pq(p\cosh^{2}{\mu}+
q\sinh^{2}{\mu}),\] and $g_{12}=0$. Therefore the area form is
given by
\[\sqrt{pq}(p\cosh^2\mu+q\sinh^2\mu)d\mu
d\theta\]

A simple calculation shows that the Lagrangian angle
$\beta=(p-q)\theta$. Thus $\Delta_{g} \beta=0$, it follows that
the surface is Hamiltonian stationary. On a Lagrangian
submanifold, we have the mean curvature vector $H=J\nabla \beta$.
Since $\beta$ depends only on $\theta$,
$$H=\f{1}{g_{22}}J\f{\partial\beta}{\partial \theta}\f{\partial
F}{\partial \theta}= \f{p-q}{g_{22}}J\f{\partial
F}{\partial \theta}. $$

To calculate $F^\perp$ we note that the normal bundle of the
surface is spanned by  $J\f{\partial F}{\partial \mu}$ and $J\f{\partial F}{\partial \theta}$. We compute
\[\begin{split}\langle F,J\f{\partial F}{\partial \mu}\rangle&={\it Re}\;(-iq\cosh{\mu}\sinh{\mu}
 -ip\sinh{\mu}\cosh{\mu})=0
\end{split}\]
and
\[\begin{split}\langle F,J\f{\partial F}{\partial \theta}\rangle &=\sqrt{pq}{\it Re}\;(-\cosh^{2}{\mu}\sqrt{pq}
+\sinh^{2}{\mu}\sqrt{pq})=-pq.
\end{split}\]
Hence \begin{equation}F^{\perp}=\f{-pq}{g_{22}}\:J\f{\partial
F}{\partial \theta} =-\f{pq}{p-q}H\end{equation} and $F$ is a
self-similar solution.  We summarize the calculations in this
section in the following proposition.
\begin{pro}
If $p>q$ are two co-prime positive integers, then
$$S(\mu, \theta)= (\cosh \mu\sqq e^{ip\theta}\,,\,i\sinh \mu \sqp e^{-iq\theta}), $$  where
 $ 0\leq \theta < 2\pi$ and $ \mu \in \R,$
 is a Hamiltonian stationary shrinker and
$$E(\mu,\theta)= (\sinh \mu \sqq e^{ip\theta} \,,\,i\cosh \mu \sqp e^{-iq\theta}), $$ where
 $ 0\leq \theta< 2\pi$ and $ \mu \in \R,$
 is a Hamiltonian stationary expander. $S$ satisfies $F^\perp=-2cH$ while $E$ satisfies
$F^\perp=2c H$, where $c=\frac{pq}{2(p-q)}$.
\end{pro}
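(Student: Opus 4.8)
The proposition is essentially a packaging of the computation already carried out in the text for the surface $F(\mu,\theta)=(\cosh\mu\sqq\ept, i\sinh\mu\sqp\eqt)$, together with the analogous computation for the expander $E$ obtained by swapping $\cosh\mu\leftrightarrow\sinh\mu$. My plan is to treat $S$ and $E$ in parallel, verifying for each of them the three required properties: Lagrangian, Hamiltonian stationary, and self-similar with the asserted sign of $c$.

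For the shrinker $S$, all the work is done above. The tangent vectors $\partial_\mu F$ and $\partial_\theta F$ have been computed, $\langle J\partial_\mu F,\partial_\theta F\rangle=0$ gives the Lagrangian condition, the induced metric has $g_{12}=0$ with $g_{22}=pq(p\cosh^2\mu+q\sinh^2\mu)$, the Lagrangian angle is $\beta=(p-q)\theta$ so $\Delta_g\beta=0$, and the normal projection satisfies $F^\perp=\frac{-pq}{p-q}H$. Setting $c=\frac{pq}{2(p-q)}$ rewrites this as $F^\perp=-2cH$, which is the self-shrinker equation since $c>0$ when $p>q$. So for $S$ I would simply assemble these identities and record the sign.

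For the expander $E(\mu,\theta)=(\sinh\mu\sqq\ept, i\cosh\mu\sqp\eqt)$, I would repeat the same four computations. The tangent vectors become $\partial_\mu E=(\cosh\mu\sqq\ept, i\sinh\mu\sqp\eqt)$ and $\partial_\theta E=\sqrt{pq}(i\sinh\mu\sqp\ept,\cosh\mu\sqq\eqt)$; the pairing $\langle J\partial_\mu E,\partial_\theta E\rangle$ again vanishes, giving the Lagrangian property. The metric coefficients are the same up to the interchange $\cosh\leftrightarrow\sinh$, and crucially the Lagrangian angle is still $\beta=(p-q)\theta$, so $E$ is again Hamiltonian stationary with $H=\frac{p-q}{g_{22}}J\partial_\theta E$. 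The only genuine sign change occurs in the normal projection: computing $\langle E,J\partial_\theta E\rangle$ one finds $+pq$ rather than $-pq$, because the roles of $\cosh^2\mu$ and $\sinh^2\mu$ are exchanged. This yields $E^\perp=\frac{pq}{p-q}H=2cH$, the self-expander equation.

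The only point requiring a little care — and the natural place for a sign error — is tracking how the $\cosh\leftrightarrow\sinh$ swap propagates through $\langle E,J\partial_\mu E\rangle$ and $\langle E,J\partial_\theta E\rangle$, since it is exactly this swap that flips the sign of $c$ in the self-similar equation and thereby distinguishes expander from shrinker. Everything else is invariant under the swap. I would therefore present the $S$ case by citing the displayed computations already done, then state the $E$ computation emphasizing only the sign flip in $\langle E,J\partial_\theta E\rangle$, and conclude that $c=\frac{pq}{2(p-q)}>0$ gives $F^\perp=-2cH$ for $S$ and $F^\perp=2cH$ for $E$.
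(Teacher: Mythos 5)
Your proposal is correct and follows essentially the same route as the paper: the proposition is indeed just a summary of the displayed computations for $S$ (tangent vectors, Lagrangian condition, $\beta=(p-q)\theta$, and $F^\perp=-\frac{pq}{p-q}H$), and your explicit recomputation for $E$, including the sign flip $\langle E, J\frac{\partial E}{\partial\theta}\rangle=+pq$, checks out. The only cosmetic difference is that the paper dispatches the expander case by observing that $E$ arises from $S$ by switching $p$ and $q$, conjugating, and multiplying by $\begin{bmatrix} 0&i\\i&0\end{bmatrix}$, whereas you verify it directly.
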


We notice that $E$ can be obtained by switching $p$ and $q$ in the
expression for $S$, taking bar, and multiplying by
$\begin{bmatrix} 0&i\\i&0\end{bmatrix}$. As $\mu\rightarrow
+\infty$, both $S$ and $E$ approach the Schoen-Wolfson cone
$C_{pq}$ over the curve $\gamma_{pq}$.

\subsection{Asymptotics of the flow}

 By the remark in the introduction, we have $\mtc S$ for $t<0$ is a smooth solution
 of the mean curvature flow, so is  $\ptc E$ for $t>0$.

\begin{pro}
If $p>q$ are two co-prime positive integers, then
$$S_t(\mu, \theta)= \mtc (\cosh \mu\sqq e^{ip\theta}\,,\,i\sinh \mu \sqp e^{-iq\theta}), $$  for $t<0$ is a smooth
solution of the mean curvature flow and so is

$$E_t(\mu,\theta)= \ptc(\sinh \mu \sqq e^{ip\theta} \,,\,i\cosh \mu \sqp e^{-iq\theta}), $$  for $t>0$.
\end{pro}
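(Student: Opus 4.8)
The plan is to derive this as a direct consequence of the preceding proposition together with the scaling principle already recorded in the introduction: if an immersion $F$ satisfies $F^{\perp}=2\kappa H$, then the dilated family $\sqrt{t/\kappa}\,F$ is moved by the mean curvature flow. So the only real content is to track how the self-similar constant $\kappa$ fixes the sign and the domain of $t$ in each of the two cases, and to note that smoothness is preserved under the dilation. I would first isolate the elementary scaling facts. Under a constant dilation $\tilde F=\lambda F$ the tangent directions are unchanged, so the normal space is unchanged and the position vector projects as $\tilde F^{\perp}=\lambda F^{\perp}$; meanwhile curvatures scale like inverse length with preserved direction, so the mean curvature vector obeys $\tilde H=\f{1}{\lambda}H$.

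Next I would let $\lambda=\lambda(t)$ depend on time and compute the velocity of $F_t=\lambda(t)F$, namely $\partial_t F_t=\lambda'(t)F$, whose normal projection (onto the normal space of $F_t$, which coincides in direction with that of $F$) is $\lambda'(t)F^{\perp}$. Since the mean curvature flow is a geometric flow, it suffices that this normal velocity equal the mean curvature $H_t=\f{1}{\lambda}H$ of $F_t$. Substituting the self-similar identity $F^{\perp}=2\kappa H$ and cancelling the nonzero vector $H$ (here $H=\f{p-q}{g_{22}}J\partial_\theta F\neq 0$ because $p>q$) turns the condition into the ordinary differential equation $2\kappa\lambda\lambda'=1$, i.e. $\f{d}{dt}\lambda^{2}=\f{1}{\kappa}$, whose solution with vanishing constant is $\lambda(t)^{2}=t/\kappa$.

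I would then specialize the sign and the domain. For the expander the previous proposition gives $E^{\perp}=2cH$ with $c=\f{pq}{2(p-q)}>0$, so $\kappa=c$ and $\lambda(t)=\ptc$ is real exactly for $t>0$, yielding $E_t=\ptc\,E$ as a flow on $t>0$. For the shrinker we have $S^{\perp}=-2cH$, i.e. $\kappa=-c<0$, so $\lambda(t)=\sqrt{t/(-c)}=\mtc$ is real exactly for $t<0$, yielding $S_t=\mtc\,S$ on $t<0$. Smoothness is then immediate: the computation in the previous subsection shows $g_{11},g_{22}>0$ for every $\mu\in\R$, so $S$ and $E$ are smooth immersions for all $\mu$, and multiplying by the smooth strictly positive factor $\lambda(t)$ preserves this on each open time interval.

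The one point that genuinely requires care — and the place where the word \emph{solution} must be read in the geometric sense — is that the radial velocity $\lambda'(t)F$ is not purely normal. One checks $\langle F,\partial_\mu F\rangle=(p+q)\sinh\mu\cosh\mu\neq 0$, so $\partial_t F_t\neq H_t$ as vector fields along the surface; there is a genuine tangential component. The hard part is therefore conceptual rather than computational: one invokes the invariance of the mean curvature flow under tangential reparametrization, so that matching the \emph{normal} velocity with $H_t$ is exactly what it means for $F_t$ to move by mean curvature flow (equivalently, one may absorb the tangential part by a reparametrization in $\mu$). Granting this, the identity $F^{\perp}=2\kappa H$ supplied by the preceding proposition is precisely what forces the normal velocities to agree, and the proposition follows.
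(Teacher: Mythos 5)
Your argument is correct and follows essentially the same route as the paper, which proves this proposition simply by invoking the scaling remark from the introduction (that $F^{\perp}=2cH$ implies $\sqrt{t/c}\,F$ moves by mean curvature flow) together with Proposition 2.1; your derivation of the ODE $2\kappa\lambda\lambda'=1$ and the resulting sign/domain analysis is exactly the content of that remark, spelled out. Your explicit acknowledgement that the velocity $\lambda'(t)F$ has a nonzero tangential part and that one must interpret ``solution'' up to tangential reparametrization is a correct and worthwhile clarification that the paper leaves implicit.
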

Denote by $h(S_t)$ the mean curvature vector of $S_t$ and
$d|\!|S_t|\!|$ the area element of $S_t$, then

\begin{equation}\label{stnorm}\left|S_t\right|^2=\left(\f{-t}{c}\right)(q\cosh^2\mu+p\sinh^2\mu),\end{equation}
\begin{equation}\label{hstnormsq}\left|h(S_t)\right|^2=\left(\f{c}{-t}\right)\f{(p-q)^2}{pq}\f{1}{p\cosh^2\mu+q\sinh^2\mu},\end{equation}

\begin{equation}\label{dst}d|\!|S_t|\!|=\left(\f{-t}{c}\right)\sqrt{pq}(p\cosh^2\mu+q\sinh^2\mu)d\mu
d\theta.\end{equation}

For positive co-prime integers $p$ and $q$ with $p>q$, we define
\[\begin{split}
C_{++}(y,\theta)&=(y\sqq\:e^{ip\theta}\,,\,iy\sqp\:e^{-iq\theta}),\\
C_{+-}(y,\theta)&=(y\sqq\:e^{ip\theta}\,,\,-iy\sqp\:e^{-iq\theta}),\\
C_{-+}(y,\theta)&=(-y\sqq\:e^{ip\theta}\,,\,iy\sqp\:e^{-iq\theta}),\text{and} \\
C_{--}(y,\theta)&=(-y\sqq\:e^{ip\theta}\,,\,-iy\sqp\:e^{-iq\theta}),
\end{split}\]
for $y\geq 0$ and $ 0\leq \theta < 2\pi$. Here $C_{++}=C_{pq}$.

Note that $S_t$, as $t\rightarrow 0^-$, approaches $C_{++}\cup
C_{+-}$ while $E_t$, as $t\rightarrow 0^+$, approaches $C_{++}\cup
C_{-+}$. The asymptotic cones of $S_t$ and $E_t$ do not match
unless $p$ and $q$ are both odd. In other cases, we can modify
$S_t$ and $E_t$ so their asymptotic cones agree at $t=0$. This
allows us to construct a Brakke flow $F_t$ that is a Hamiltonian
stationary self-shrinker for $t<0$, a Hamiltonian stationary
self-expander for $t>0$, and a pair of cones at $t=0$ in all
cases.

\section{Brakke motion}\label{brakke}

 A family of varifolds
$V_{t}$ is said to form a solution of the Brakke motion \cite{br}
if
\begin{equation}\label{brakkeineq}\bar{D}|\!|V_{t}|\!|(\phi)\leq
\delta (V_{t},\phi)(h(V_{t}))\end{equation}
 for each $\phi \in
C^{1}_{0}(\R^{n})$ with $\phi \geq 0$, where
$\bar{D}|\!|V_{t}|\!|(\phi)$ is the upper derivative defined by $
\overline{\lim}_{t_{1}\rightarrow t }
\f{|\!|V_{t_{1}}|\!|(\phi)-|\!|V_{t}|\!|(\phi)} {t_{1}-t}$ and
$h(V_{t})$ is the generalized mean curvature vector of $V_{t}$. In
the setting of this paper,
$$ \delta (V_t,\phi)(h(V_t))=-\int \phi |h(V_t)|^{2} d|\!|V_t|\!|
+\int D\phi\cdot h(V_t)d|\!|V_t|\!|.$$

In our case, the singularity happens at the $t=0$ slice. We
formulate the following proposition as a criterion to check the
solutions of Brakke motion in this case.

\begin{pro}\label{mainpro}

Suppose the varifold $V_t$, $a <t<b$ forms a smooth mean curvature
flow in $\R^n$ except at $t=c\in (a, b)$ and $|\!|V_t|\!|$
converges in Radon measure to $|\!|V_c|\!|$ as $t\rightarrow c$.
If $ \lim_{t\rightarrow c^-} \f{d}{dt}|\!|V_t|\!|(\phi)$ and
$\lim_{t\rightarrow c^+} \f{d}{dt}|\!|V_t|\!|(\phi)$ are both
either finite or $-\infty$ and

\begin{equation}\label{assumption}\lim_{t\rightarrow
c^\pm} \f{d}{dt}|\!|V_t|\!|(\phi)\leq \delta (V_0,
\phi)(h(V_0))\end{equation} for any $\phi\in C^1_0(\R^n)$ then
$V_t$ forms a solution of the Brakke motion.
\end{pro}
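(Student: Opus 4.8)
The plan is to verify the Brakke inequality (\ref{brakkeineq}) directly at every time $t\in(a,b)$ and for every nonnegative $\phi\in C^1_0(\R^n)$, splitting into the regular times $t\neq c$ and the single singular time $t=c$. Throughout I abbreviate $f(t)=|\!|V_t|\!|(\phi)=\int\phi\,d|\!|V_t|\!|$.

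For $t\neq c$ the family is a smooth mean curvature flow, so $f$ is differentiable there, and the standard first variation formula under mean curvature flow (the surface moving with normal velocity $h$) gives the identity
\[\f{d}{dt}|\!|V_t|\!|(\phi)=\int D\phi\cdot h(V_t)\,d|\!|V_t|\!|-\int\phi\,|h(V_t)|^2\,d|\!|V_t|\!|=\delta(V_t,\phi)(h(V_t)).\]
Since the ordinary derivative exists, it coincides with the upper derivative $\bar D|\!|V_t|\!|(\phi)$, so (\ref{brakkeineq}) holds with equality at every regular time. This step is routine.

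The substantive step is $t=c$, where I must show $\bar D|\!|V_c|\!|(\phi)\leq\delta(V_c,\phi)(h(V_c))$. The key observation is that the two-sided upper derivative is the maximum of the two one-sided difference-quotient limits, each of which can be identified with the corresponding one-sided limit of $f'$. The Radon convergence $|\!|V_t|\!|\to|\!|V_c|\!|$ makes $f$ continuous at $c$, while smoothness away from $c$ makes $f$ continuous on $[t_1,c]$ (resp. $[c,t_1]$) and differentiable on the interior. Hence the mean value theorem applies: for $t_1<c$ there is $\xi\in(t_1,c)$ with $\f{f(t_1)-f(c)}{t_1-c}=f'(\xi)$, and as $t_1\to c^-$ one has $\xi\to c^-$, so the left difference-quotient limit equals $\lim_{s\to c^-}f'(s)$ — the hypothesis that this limit exists as a finite number or as $-\infty$ is exactly what lets the identification go through, the $-\infty$ case simply forcing the difference quotient to $-\infty$. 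The right side is handled identically. Therefore
\[\bar D|\!|V_c|\!|(\phi)=\max\Big(\lim_{t\to c^-}\f{d}{dt}|\!|V_t|\!|(\phi),\;\lim_{t\to c^+}\f{d}{dt}|\!|V_t|\!|(\phi)\Big),\]
and assumption (\ref{assumption}) bounds both entries by $\delta(V_c,\phi)(h(V_c))$, which yields the desired inequality at $t=c$.

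I expect the main obstacle to be the bookkeeping at the singular slice rather than any deep estimate. One must check that the hypothesis ``finite or $-\infty$'' excludes the two pathologies that would break the argument: a one-sided derivative limit equal to $+\infty$, which could force $\bar D|\!|V_c|\!|(\phi)=+\infty$ and violate (\ref{brakkeineq}); and genuine oscillation of $f'$ near $c$, which would invalidate the mean-value-theorem identification of the one-sided difference-quotient limits. Once those are ruled out, the regular-time identity and the singular-time estimate together establish (\ref{brakkeineq}) for all $t$ and all $\phi\geq0$, which is precisely the definition of a Brakke motion.
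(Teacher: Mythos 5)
Your proof is correct and follows essentially the same route as the paper: the paper's argument is exactly the mean value theorem identification of the one-sided difference quotients at $t=c$ with the one-sided limits of $\f{d}{dt}|\!|V_t|\!|(\phi)$, combined with the first variation identity at regular times. Your version merely spells out the regular-time case and the role of the ``finite or $-\infty$'' hypothesis more explicitly than the paper does.
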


\begin{proof}
Since $|\!|V_t|\!|(\phi)$ is continuous on the interval $(a, c]$
and differentiable on $(a, c)$. By the mean value theorem, we have

\[\lim_{t\rightarrow c^-}\frac{|\!|V_t|\!|(\phi)-|\!|V_c|\!|(\phi)}{t-c}=\lim_{t\rightarrow
c^-}\f{d}{dt} |\!|V_t|\!|(\phi).\]
The case for $t\rightarrow c^+$ can be treated similarly.
Therefore the assumption (\ref{assumption}) implies
that (\ref{brakkeineq}) holds.

\end{proof}

\section{Proof of Theorem 1.1}
The proof of Theorem 1.1 is divided into three cases according to the parities of $p$ and $q$. In each case, we show
that (\ref{assumption}) holds with equality.

Before going into the details of the proof, we first make some
observations on the asymptotic cones. Consider shrinkers of the
form

\[\{\mtc(x_1\sqrt{q}e^{ip\theta},x_2\sqrt{p} e^{-iq\theta}):
|x_1|^2-|x_2|^2=1, t<0\}\] and expanders

\[\{\ptc(x_1\sqrt{q}e^{ip\theta},x_2\sqrt{p} e^{-iq\theta}):
|x_1|^2-|x_2|^2=-1,t>0\}.\]

As $t\rightarrow 0$ both of them converge to $C_{++}\cup C_{+-}
\cup C_{-+} \cup C_{--}$. By shifting $\theta$ to $\theta+\pi$, it
is easy to see that

(i) When both $p$ and $q$ are odd, $C_{++}=C_{--}$ and
$C_{+-}=C_{-+}$.

(ii) When $p$ is odd and $q$ is even, $C_{++}=C_{-+}$ and
$C_{+-}=C_{--}$.

(iii) When $p$ is even and $q$ is odd, $C_{++}=C_{+-}$ and
$C_{-+}=C_{--}$.

That is, the shrinkers and expanders converge to the double of two
cones. In the following, we manage to arrange $S_t$ and $E_t$ so
that they converge to a single copy of the two cones.  More
precisely, when $p$ and $q$ are both odd, the asymptotic cones are
$C_{++}\cup C_{+-}$. When $p$ is odd and $q$ is even, the
asymptotic cones are $C_{++}\cup C_{+-}$. When $p$ is even and $q$
is odd, the asymptotic cones are $C_{++}\cup C_{-+}$.
\subsection{ Case 1: both $p$ and $q$ are odd}

We start with $t<0$.  By change of variable $y=\mtc \sinh\mu$, it
is not hard to see $S_t$ as $t\rightarrow 0^-$ converges to the
varifold $S_0$ defined by

\[S_0(y,\theta)=(|y|\sqq\,\ept, iy\sqp\,\eqt), y\in \R, 0\leq \theta <2\pi.\]

The norm square of $S_0$ is given by

\begin{equation}\label{s0norm}\left|S_0\right|^2=y^2(p+q).\end{equation}

The norm square of generalized mean curvature vector and the area
element of $S_0$ are given by:

\begin{equation}\label{hs0normsq}\left|h(S_0)\right|^2=\f{(p-q)^2}{pq(p+q)}\f{1}{y^2},\end{equation}

and \begin{equation}\label{dszero}\dszero=|y|\sqrt{pq}(p+q)dy
d\theta.\end{equation}

Since for a smooth mean curvature flow, we have

\[\f{d}{dt}|\!|S_t|\!|(\phi)=\delta (S_t,\phi)(h(S_t))=-\int \phi
|h(S_t)|^{2} \dst +\int D\phi \cdot h(S_t) \dst.\]

To apply Proposition \ref{mainpro}, it suffices to show
\begin{equation}\label{mainid}\begin{split}&\lim_{t\rightarrow 0^-}-\int \phi \,|h(S_t)|^{2}
\dst
+\int D\phi \cdot h(S_t) \dst\\
&=-\int \phi \,|h(S_0)|^{2} \dszero +\int D\phi \cdot h(S_0)
\dszero\end{split}\end{equation} and the limit is either finite or
$-\infty$. From (\ref{hstnormsq}) and (\ref{dst}), we obtain

\begin{equation}\label{hstsqdst}\left|h(S_t)\right|^2 \dst=\f{(p-q)^2}{\sqrt{pq}} d\mu
d\theta\end{equation} and

\begin{equation}\label{hstdst}\left|h(S_t)\right|
\dst=(p-q)\sqrt{\left(\f{-t}{c}\right)(p\cosh^2\mu+q\sinh^2\mu)}
d\mu d\theta.\end{equation}

We first show that $\int D\phi \cdot h(S_t) \dst$ is always
finite. By (\ref{hstdst}), this integral is bounded above by

\begin{equation}\label{dphihst}\begin{split}&\int \left|D\phi\right| \left|h(S_t)\right| \dst \\
&=(p-q)\int
\left|D\phi(S_t(\mu,\theta))\right|\sqrt{\left(\f{-t}{c}\right)(p\cosh^2\mu+q\sinh^2\mu)}
d\mu d\theta.\end{split}\end{equation}

Suppose $\phi$ vanishes outside $B(0;R)$ and recall the expression
(\ref{stnorm}) for $|S_t|$, we see the integral is supported in
the domain \[\left(\f{-t}{c}\right)(q\cosh^2\mu+p\sinh^2\mu)\leq
R^2, 0\leq \theta<2\pi.\]

Thus (\ref{dphihst}) is bounded above by

\begin{equation}\label{C1int}C_1 \int_{\left(\f{-t}{c}\right)(q\cosh^2\mu+p\sinh^2\mu)\leq
R^2 } \sqrt{\left(\f{-t}{c}\right) (p\cosh^2\mu+q\sinh^2\mu)}
d\mu\end{equation} for some constant $C_1>0$ depending on the
upper bound of $|D\phi|$.

Consider the change of variable $y=\mtc \sinh \mu$, we have
$dy=\mtc \cosh\mu d\mu$ and
$\mtc\cosh\mu=y\f{\cosh\mu}{\sinh\mu}$, (\ref{C1int}) becomes

\[C_1 \int_{|y|\leq \sqrt{\f{R^2+(\f{t}{c})q}{p+q}} }\sqrt{p+q\tanh^2 \mu} dy\] which is finite
as $\tanh^2\mu\leq 1$.

Next we claim the limit $ \lim_{t\rightarrow 0^-}-\int \phi
|h(S_t)|^{2} \dst$ is finite if $ \phi(0)=0$ and $-\infty$ if
$\phi(0)\not= 0$.

By (\ref{hstsqdst}),
\[\int \phi \,|h(S_t)|^{2} \dst=\int \phi(S_t(\mu, \theta))\f{(p-q)^2}{\sqrt{pq}}d\mu d\theta.\]

When $\phi(0)=0$, we may assume $\phi$ is supported in $B(0;R)$
and $\phi(S_t(\mu, \theta))\leq C_2\left|S_t(\mu, \theta)\right|$ for some $C_2>0$, therefore

\[\int \phi(S_t(\mu, \theta)) d\mu d\theta\leq C_3\int_{\left(\f{-t}{c}\right)(q\cosh^2\mu+p\sinh^2\mu)\leq R^2 } \sqrt{\left(\f{-t}{c}\right)(q\cosh^2\mu+p\sinh^2\mu)} d\mu\] for some $C_3>0$.

This is similar to (\ref{dphihst}) and can be shown to be finite
by the change of variable $y=\mtc\sinh\mu$.

On the other hand, when $\phi(0)>0$, we may assume $\phi(0)\geq
C_4>0 $ on $B(0;\epsilon)$, thus

\[\int \phi(S_t(\mu, \theta))d\mu d\theta\geq 2\pi C_4\int_{\left(\f{-t}{c}\right)(q\cosh^2\mu+p\sinh^2\mu)
\leq \epsilon^2 } d\mu=2\pi C_4\int_{|y|\leq
\sqrt{\f{\epsilon^2+(\f{t}{c})q}{p+q}}
}\f{1}{\sqrt{y^2+(\f{-t}{c})}} dy\] which tends to $\infty$ as $t\rightarrow 0^-$ by observing
$\f{1}{\sqrt{y^2+(\f{-t}{c})}}\geq \f{1}{|y|+\sqrt{\f{-t}{c}}}$.

Equations (\ref{s0norm}), (\ref{hs0normsq}), and (\ref{dszero})
imply that $\int\phi|h(S_0)|^2 d|\!|S_0|\!|$ is finite if
$\phi(0)=0$ and $-\infty$ if $\phi(0)>0$. Now (\ref{mainid})
follows from the change of
variable $y=\mtc\sinh\mu$, the fact that $h(S_t)\rightarrow h(S_0)$ , and the dominant convergence theorem.

For $t>0$, we consider $E_t$ with
\begin{equation}\label{etnorm}\left|E_t\right|^2=\left(\f{t}{c}\right)(p\cosh^2\mu+q\sinh^2\mu),\end{equation}
\begin{equation}\label{hetnormsq}\left|h(E_t)\right|^2=\left(\f{c}{t}\right)\f{(p-q)^2}{pq}\f{1}{q\cosh^2\mu+p\sinh^2\mu},\end{equation}

\begin{equation}\label{det}d|\!|E_t|\!|=\left(\f{t}{c}\right)\sqrt{pq}(q\cosh^2\mu+p\sinh^2\mu)d\mu
d\theta.\end{equation}

As $t\rightarrow 0^+$, $E_t$ converges to the varifold $E_0$
defined by \[E_0(y,\theta)= (y \sqq e^{ip\theta} \,,\,i|y| \sqp
e^{-iq\theta}), y\in \R, 0\leq \theta<2\pi.\]

$E_0$ coincides with $S_0$ by the change of variable

\begin{equation}\label{cov} E_{0}(y, \theta)=S_{0} (y, \theta+ \arg y).\end{equation} when
$p$ and $q$ are both odd (note that $\arg y=0$ or $\pi$).

The identity
\begin{equation}\label{mainid2}\begin{split}&\lim_{t\rightarrow 0^+}-\int \phi \,|h(E_t)|^{2}
\deet
+\int D\phi \cdot h(E_t) \deet\\
&=-\int \phi \,|h(E_0)|^{2} \dezero +\int D\phi \cdot h(E_0)
\dezero\end{split}\end{equation} can be checked similarly.

\subsection{ Case 2:  $p$ odd and $q$ even}
In this case, for $t<0$, $V_{t}$ is defined to be $S_{t}$ as before.
Thus by change of variable $y=\mtc \sinh\mu$, $S_t$ as $t\rightarrow 0^-$
converges to the
varifold $S_0$ defined by

\[S_0(y,\theta)=(|y|\sqq\,\ept, iy\sqp\,\eqt), y\in \R, 0\leq \theta <2\pi.\]
Moreover, the  identity (\ref{mainid}) holds.

For $t>0$, we define $V_{t}$ to be $e^{i\arg \mu} E_{t}(\mu,
\theta+\f{\arg \mu}{q})$ (note that $\arg \mu=0$ or $\pi$). By
change of variable $y=\ptc \sinh\mu$, it is not hard to see $V_t$
as $t\rightarrow 0^+$ converges to the varifold $V_0$ defined by
\[V_0(y,\theta)=\begin{cases}\begin{aligned}&(y\sqq\,\ept, iy\sqp\,\eqt),& y\geq 0, 0\leq \theta <2\pi\\&
-(y\sqq\,e^{ip(\theta+\f{\pi}{q})}, i|y|\sqp\,e^{-iq(\theta+\f{\pi}{q})}),&
y< 0, 0\leq \theta <2\pi \end{aligned}\end{cases}  \]
$V_0$ coincides with $S_0$ by the change of variable

\begin{equation}\label{cov2} V_{0}(y, \theta)=S_{0} (y, \theta+ \f{\arg y}{q}).\end{equation}

The angle shift of $V_t$ for $\mu<0$ is to make the
parametrization continuous at $\mu=0$. Although the tangent plane
from $\mu\rightarrow 0^+$ and $\mu\rightarrow 0^-$ do not agree.
The induced volume form and mean curvature vector from both sides
are the same. Hence $V_t$ can still be considered as a
self-expander for $t>0$. In fact, the image of $V_t$ for $t>0$ can
be regarded as two complete non-oriented smooth surfaces
intersecting at one circle. We claim that when $V_t$ is considered
as a Radon measure, its generalized mean curvature vector $h(V_t)$
is the same as the usual mean curvature vector. That is, there is
no contribution from the singular set $\{\mu=0\}$. To compute the
generalized mean curvature vector, we choose a family of ambient
diffeomorphism $\psi_s$ with  $\psi_0=id$ and
$\frac{d\psi_s}{ds}|_{s=0}=W$ and derive the first variation
formula $\frac{d|\!|(\psi_s)_*(V_t)|\!|}{ds}|_{s=0}$.

We can divide the image into $\mu<0$ and $\mu>0$ with boundary
curve $\{\ptc(0, i\sqrt{p} e^{-iq\theta}):0\leq \theta<2\pi\}$,
and calculate separately. To prove the claim, the essential part
is to compute the contribution from the boundary. Note that the
unit normal of the boundary from the $\mu>0$ side is
$(e^{ip\theta}, 0)$, while it is $(-e^{ip(\theta+\f{\pi}{q})}, 0)$
from the $\mu<0$ side. We observe that each of $\theta,
\theta+\f{2\pi}{q}, \cdots, \theta+\f{2\pi(q-1)}{q}$ determines
the same boundary point for $0\leq \theta<\f{2\pi}{q}$. Thus the
contribution of the boundary to the first variation from the
$\mu>0$ side is

\[\begin{split}&\int_0^{2\pi} W\cdot (e^{ip\theta}, 0)\ptc \sqrt{p}d\theta\\
&=\ptc \sqrt{p}\int_0^{\f{2\pi}{q}} W\cdot (e^{ip\theta},
0)(1+e^{i\f{2\pi p}{q}}+\cdots+ e^{i\f{2\pi p(q-1)}{q}}
)d\theta=0.\end{split}\]

The last equality follows from the fact that $1+e^{i\f{2\pi
p}{q}}+\cdots+ e^{i\f{2\pi p(q-1)}{q}}=0$ for $p$, $q$ being two
co-prime integers and $q>1$. Since $q$ is a positive even number,
this is certainly the case. The contribution of the boundary from
the $\mu<0$ side is also zero for the same reason. Thus the usual
mean curvature vector agrees with the generalized mean curvature
vector for $V_t$. We have

\[|h(V_t)|^2=\f{c}{t} \f{(p-q)^2}{pq}(p\cosh^2\mu+q\sinh^2\mu)\]
which is bounded for any fixed $t>0$. Thus by the dominate
convergence theorem, we still have

\begin{equation}\label{match}\f{d}{dt}|\!|V_t|\!|(\phi)=-\int \phi \,|h(V_t)|^{2}
d|\!|V_t|\!| +\int D\phi \cdot h(V_t) d|\!|V_t|\!|\end{equation}
for $t>0$.

Hence to apply Proposition 3.1, it suffices to show
\begin{equation}\label{mainid22}\begin{split}&\lim_{t\rightarrow 0^+}-\int \phi \,|h(V_t)|^{2}
d|\!|V_t|\!|
+\int D\phi \cdot h(V_t) d|\!|V_t|\!|\\
&=-\int \phi \,|h(V_0)|^{2}  d|\!|V_0|\!| +\int D\phi \cdot h(V_0)
 d|\!|V_0|\!|. \end{split}\end{equation} This identity can be
 checked similarly using the following equations:

\begin{equation}\label{etnorm2}\left|V_t\right|^2=
\left(\f{t}{c}\right)(p\cosh^2\mu+q\sinh^2\mu),\end{equation}
\begin{equation}\label{hetnormsq2}\left|h(V_t)\right|^2
=\left(\f{c}{t}\right)\f{(p-q)^2}{pq}\f{1}{q\cosh^2\mu+p\sinh^2\mu},\end{equation}

\begin{equation}\label{dett2}d|\!|V_t|\!|=
\left(\f{t}{c}\right)\sqrt{pq}(q\cosh^2\mu+p\sinh^2\mu)d\mu
d\theta.\end{equation}

\subsection{ Case 3:  $p$ even and $q$ odd}
In this case, for $t>0$, $V_{t}$ is defined to be $E_{t}$ as in case 1.
Thus by change of variable $y=\ptc \sinh\mu$, $E_t$ as $t\rightarrow 0^+$
converges to the
varifold $E_0$ defined by
\[E_0(y,\theta)= (y \sqq e^{ip\theta} \,,\,i|y| \sqp
e^{-iq\theta}), y\in \R, 0\leq \theta<2\pi.\]
Moreover, the  identity (\ref{mainid2}) holds.

For $t<0$, we define $V_{t}$ to be $e^{i\arg \mu} S_{t}(\mu,
\theta+\f{\arg \mu}{p})$ (note that $\arg \mu=0$ or $\pi$). By
change of variable $y=\mtc \sinh\mu$, it is not hard to see $V_t$
as $t\rightarrow 0^-$ converges to the varifold $V_0$ defined by
$$V_0(y,\theta)=\begin{cases}\begin{aligned} &(y\sqq\,\ept, iy\sqp\,\eqt), &y\geq 0, 0\leq \theta <2\pi\\
&-(|y|\sqq\,e^{ip(\theta+\f{\pi}{p})}, iy\sqp\,e^{-iq(\theta+\f{\pi}{p})}),&
y< 0, 0\leq \theta <2\pi  \end{aligned}\end{cases}$$
$V_0$ coincides with $E_0$ by the change of variable

\begin{equation}\label{cov3} V_{0}(y, \theta)=E_{0} (y, \theta+ \f{\arg y}{p}).\end{equation}
By similar discussions as in case 2, it can be shown that for
$t<0$, $V_t$  is still a self-shrinker and satisfies
(\ref{match}).

Moreover, for $t<0$, we have $V_t$ with
\begin{equation}\label{stnorm3}\left|V_t\right|^2
=\left(\f{-t}{c}\right)(q\cosh^2\mu+p\sinh^2\mu),\end{equation}
\begin{equation}\label{hstnormsq3}\left|h(V_t)\right|^2
=\left(\f{c}{-t}\right)\f{(p-q)^2}{pq}\f{1}{p\cosh^2\mu+q\sinh^2\mu},\end{equation}

\begin{equation}\label{dst3}d|\!|V_t|\!|
=\left(\f{-t}{c}\right)\sqrt{pq}(p\cosh^2\mu+q\sinh^2\mu)d\mu
d\theta.\end{equation}
The identity
\begin{equation}\label{mainid13}\begin{split}&\lim_{t\rightarrow 0^-}-\int \phi \,|h(V_t)|^{2}
d|\!|V_t|\!|
+\int D\phi \cdot h(V_t) d|\!|V_t|\!|\\
&=-\int \phi \,|h(V_0)|^{2}  d|\!|V_0|\!| +\int D\phi \cdot h(V_0)
 d|\!|V_0|\!|\end{split}\end{equation} can be checked similarly.

 \section{Proof of Theorem 1.2}
 It is important to determine which cone constructed in \cite{sw}
 is area minimizing among Lagrangian competitors. Only these cones can occur as blow-up profiles for the
 singularities in the Lagrangian minimizers. Schoen and Wolfson show that
 a $(p,q)$ cone is stable if and only if $p-q=1$ and conjectured  that only
 $(2,1)$ cone (assuming $p>q$) is area minimizing.
 From the proof in section~4.2, we can in fact show that this is the
 case infinitesimally. In the following, we prove Theorem 1.2.
\bigskip

\begin{proof} Suppose $p>q>1$ and $\mu \geq 0$, $0 \leq \theta <2\pi$. Define
$V_t=S_{t}(\mu, \theta)$ for $t<0$,  $V_t=E_{t}(\mu, \theta)$ and
$V_{0}=C_{pq}$.
Both $S_{t}$ and $E_{t}$ converge to $C_{pq}$ as $t\rightarrow 0$.
When $t \neq 0$, the image of
$\mu=0$ is the boundary of $V_t$. Because both $p$ and $q$ are greater than one,  there is
no boundary contribution on the generalized mean curvature vector. We
take the case $t>0$ as an example. The boundary curve is
$\{\ptc(0, i\sqrt{p} e^{-iq\theta}):0\leq \theta<2\pi\}$ and the
unit normal vector  is $(e^{ip\theta}, 0)$. We observe that each
of $\theta, \theta+\f{2\pi}{q}, \cdots, \theta+\f{2\pi(q-1)}{q}$
determines the same boundary point for $0\leq \theta<\f{2\pi}{q}$.
Thus the contribution from the boundary to the first variation
 is

\[\begin{split}&\int_0^{2\pi} W\cdot (e^{ip\theta}, 0)\ptc \sqrt{p}d\theta\\
&=\ptc \sqrt{p}\int_0^{\f{2\pi}{q}} W\cdot (e^{ip\theta},
0)(1+e^{i\f{2\pi p}{q}}+\cdots+ e^{i\f{2\pi p(q-1)}{q}}
)d\theta=0\end{split}\]if $q$ is an integer greater than one. Because $p>1$, there is no
contribution from the boundary to the first variation when
$t<0$ either. The same arguments as in last section show that
$V_{t}$ forms a solution of the Brakke motion.
\end{proof}

As a generalization of the mean curvature flow, the Brakke motion decreases area.
Theorem 1.2 thus suggests that the cone $C_{p,q}$ may not be area
minimizing when $q>1$. Wolfson's
counterexample \cite{wo} shows that one of the $(p, q)$ cone must be area
minimizing. This leaves the $(2,1)$ cone as the only candidate for area
minimizer. However, we remark this observation does not resolve the
conjecture  because, in classical sense, one needs to find
Lagrangian competitors with the same boundary.

\section{Higher dimensional examples}
 In the two-dimensional case, after multiplying by the matrix
 $\begin{bmatrix} 1&0\\0&-i\end{bmatrix}\in U(2)$, our example can be
 rewritten in the form

 \[\{(x_1 e^{ip\theta}, x_2 e^{-iq\theta})\,|\,\, px_1^2-qx_2^2=pq, (x_1, x_2)\in \R^2 ,0\leq \theta<2\pi
 \}\subset \C^2 .\]

 Now consider the following generalization to higher dimensions:
 for any $n$ nonzero real numbers $\lambda_1,\cdots, \lambda_n$, consider
 the submanifold $\Sigma$ of $\C^n$ defined by

\[\{(x_1 e^{i\lambda_1\theta}, \cdots , x_n e^{i\lambda_n\theta})\,|\,\, \sum_{i=1}^n \lambda_ix_i^2=C,
(x_1, \cdots, x_n)\in \R^n\}\] for some constant $C$.

  It is not hard to check that $\Sigma$ is
 Lagrangian in $\C^n$ with Lagrangian angle given by
 $\beta=(\sum_{i=1}^n \lambda_i)\theta+c$ for some constant $c$ . Therefore $\Sigma$
 is Hamiltonian stationary and is special if $\sum_{i=1}^n\lambda_i=0$.
 Such special Lagrangians were studied by Haskins in \cite{ha1} \cite{ha2} (for $n=3$) and
 Joyce in \cite{jo1} (for general
 dimensions).
 We were also informed by Professor Joyce that the Hamiltonian
 stationary ones
 may also be obtained by applying his method
 of ``perpendicular symmetries " in \cite{jo2}.

If $\sum_{i=1}^n\lambda_i\neq 0$, the position vector $F$
 satisfies
\[F^\perp=\f{-C}{\sum_{i=1}^n \lambda_i}H.\] That is,  the submanifold $\Sigma$
is a Hamiltonian stationary self-similar solution of the mean
curvature flow. Similar procedures as in this paper can be applied
to show that when $\lambda_i$ are all integers,

\[\Sigma_t=\{(x_1 e^{i\lambda_1\theta}, \cdots , x_n e^{i\lambda_n\theta})| \sum_{i=1}^n \lambda_ix_i^2=(-2t)\sum_{i=1}^n \lambda_i,
(x_1, \cdots, x_n)\in \R^n,0\leq \theta<2\pi\}\] form a Brakke
flow without mass loss. We shall discuss further properties of
these higher dimensional examples in a forthcoming paper
\cite{lw}.


\begin{thebibliography}{999}

\bibitem[AN]{an}H. Anciaux, \textit{Construction of Lagrangian self-similar solutions to
the mean curvature flow in $\C^ n$.}  Geom. Dedicata  \textbf{120}
(2006), 37--48.

\bibitem[BR]{br} K.A. Brakke, \textit{The motion of a surface by its mean curvature.} Mathematical Notes,
Princeton University Press, 1978.




\bibitem[FIK]{fik} M. Feldman; T. Ilmanen; D. Knopf, \textit{Rotationally symmetric shrinking and
expanding gradient K\"ahler-Ricci solitons.}  J. Differential Geom.  \textbf{65}  (2003),  no. 2, 169--209.
\bibitem[HL]{hl} R. Harvey; H.B. Lawson, \textit{Calibrated geometries.} Acta Math.
\textbf{148} (1982), 48-156.

\bibitem[HA1]{ha1} M. Haskins, \textit{Special Lagrangian cones.} Amer. J. Math. \textbf{126} (2004), no. 4,
845--871.
\bibitem[HA2]{ha2} M. Haskins, \textit{The geometric complexity of special Lagrangian
$T\sp 2$-cones.} Invent. Math. \textbf{157} (2004), no. 1, 11--70.
\bibitem[HA3]{ha3} M. Haskins, \textit{personal communications}.


\bibitem[HU]{hu} G. Huisken, \textit{Asymptotic behavior for singularities of the mean curvature flow.}
 J. Differential Geom. \textbf{31}  (1990),  no. 1, 285--299.




 \bibitem[JO1]{jo1} D. Joyce, \textit{Constructing special Lagrangian $m$-folds in
 $\Bbb C\sp m$ by evolving quadrics.} Math. Ann. \textbf{320} (2001), no. 4, 757--797.

\bibitem[JO2]{jo2} D.  Joyce, \textit{Special Lagrangian $m$-folds in $\Bbb C\sp m$
with symmetries.} Duke Math. J. \textbf{115} (2002), no. 1, 1--51.

\bibitem[LW]{lw} Y.I. Lee; M.T. Wang, in preparation.
\bibitem[SW]{sw} R. Schoen; J.G. Wolfson, \textit{Minimizing area among Lagrangian surfaces: the mapping
problem.} J. Differential Geom. \textbf{58} (2001), no. 1, 1-86.


\bibitem[SM]{sm} K. Smoczyk, \textit{Angle theorems for the Lagrangian mean curvature
flow.} Math. Z. \textbf{240} (2002), no. 4, 849--883.

\bibitem[SW]{sw} K. Smoczyk; M.-T. Wang, \textit{Mean curvature flows of Lagrangians
submanifolds with convex potentials.} J. Differential Geom.
\textbf{62} (2002), no. 2, 243--257.

\bibitem[SYZ]{syz} A. Strominger; S.-T. Yau; E. Zaslow, \textit{Mirror symmetry is $T$-duality.}
Nuclear Phys. B  \textbf{479}  (1996),  no. 1-2, 243--259.

\bibitem[WA1]{wa1} M.-T. Wang, \textit{Deforming area preserving
diffeomorphism of surfaces by mean curvature flow.} Math. Res.
Lett. \textbf{8} (2001), no.5-6, 651-662.

\bibitem[WA2]{wa2}  M.-T. Wang, \textit{Long-time existence and convergence of graphic mean
curvature flow in arbitrary codimension.} Invent. Math.
\textbf{148} (2002), no. 3, 525--543.


\bibitem[WO]{wo} J.G. Wolfson, \textit{Lagrangian homology classes without regular
minimizers}, J. Differential Geom. \textbf{71} (2005), 307-313.


 \end{thebibliography}
\end{document}